\documentclass[11pt,leqno]{article}
\usepackage{amsthm,amsfonts,amssymb,amsmath,oldgerm}
\usepackage{epsfig}
\numberwithin{equation}{section}
\usepackage[thinlines]{easybmat}


\setlength{\evensidemargin}{0in} \setlength{\oddsidemargin}{0in}
\setlength{\textwidth}{6in} \setlength{\topmargin}{0in}
\setlength{\textheight}{8in}



\def\eps{\varepsilon }


\def\eps{\varepsilon}


\newcommand\br{\begin{remark}}
\newcommand\er{\end{remark}}
\newcommand\bp{\begin{pmatrix}}
\newcommand\ep{\end{pmatrix}}
\newcommand\be{\begin{equation}}
\newcommand\ee{\end{equation}}
\newcommand\ba{\begin{equation}\begin{aligned}}
\newcommand\ea{\end{aligned}\end{equation}}


\newcommand{\bap}{\begin{app}}
\newcommand{\eap}{\end{app}}
\newcommand{\begs}{\begin{exams}}
\newcommand{\eegs}{\end{exams}}
\newcommand{\beg}{\begin{example}}
\newcommand{\eeg}{\end{exaplem}}
\newcommand{\bpr}{\begin{proposition}}
\newcommand{\epr}{\end{proposition}}
\newcommand{\bt}{\begin{theorem}}
\newcommand{\et}{\end{theorem}}
\newcommand{\bc}{\begin{corollary}}
\newcommand{\ec}{\end{corollary}}
\newcommand{\bl}{\begin{lemma}}
\newcommand{\el}{\end{lemma}}
\newcommand{\bd}{\begin{definition}}
\newcommand{\ed}{\end{definition}}
\newcommand{\brs}{\begin{remarks}}
\newcommand{\ers}{\end{remarks}}

\newtheorem{theo}{Theorem}[section]

\newtheorem{exams}[theo]{Examples}

\numberwithin{equation}{section}


\newcommand{\MM}{{\mathbb M}}

\newtheorem{theorem}{Theorem}[section]
\newtheorem{proposition}[theorem]{Proposition}
\newtheorem{corollary}[theorem]{Corollary}
\newtheorem{lemma}[theorem]{Lemma}
\newtheorem{definition}[theorem]{Definition}

\newtheorem{example}[theorem]{Example}
\newtheorem{remark}[theorem]{Remark}




\pagestyle{headings}

\newtheorem{thm}{Theorem}

\newcommand{\RM}{\mathbb{R}}

\newcommand{\CM}{\mathbb{C}}

\newcommand{\spec}{\operatorname{spec}}

\title{
On the Modulation Equations and Stability of Periodic GKdV Waves via Bloch Decompositions
}


\author{\sc \small
Mathew A. Johnson\thanks{Indiana University, Bloomington, IN 47405;
matjohn@indiana.edu: Research of M.J. was partially supported by an NSF Postdoctoral Fellowship under NSF grant no. DMS-0902192.},
~
Kevin Zumbrun\thanks{Indiana University, Bloomington, IN 47405;
kzumbrun@indiana.edu:
Research of K.Z. was partially supported
under NSF grants no. DMS-0300487 and DMS-0801745.
 },
~\&~
Jared C. Bronski\thanks{University of Illinois at Urbana-Champaign, Urbana, IL 61801; jared@math.uiuc.edu: Research of J.C.B.
was partially supported under NSF grants no. NSF DMS 0807584.}}
\begin{document}

\maketitle


\begin{center}
{\bf Keywords}: Modulational Instability; Periodic Waves; Generalized Korteweg-de Vries Equation; Whitham Equations
\end{center}


\begin{abstract}
In this paper, we complement recent results of Bronski and Johnson and of Johnson and Zumbrun concerning the modulational
stability of spatially periodic traveling wave solutions of the generalized Korteweg-de Vries equation.
In this previous work it was shown by rigorous Evans function calculations that the formal slow modulation
approximation resulting in the Whitham system accurately describes the spectral stability to long wavelength
perturbations.  Here, we reproduce this result without reference to the Evans function by using direct Bloch-expansion
methods and spectral perturbation analysis.  This approach has the advantage of applying also in the more general
multi-periodic setting where no conveniently computable Evans function is yet devised.  In particular, we complement
the picture of modulational stability described by Bronski and Johnson by analyzing
the projectors onto the total eigenspace bifurcating from the origin in a neighborhood
of the origin and zero Floquet parameter.  
We show the resulting linear system is equivalent, to leading order and up to conjugation, to the Whitham system and
that, consequently, the characteristic polynomial of this system agrees (to leading order) with the linearized dispersion
relation derived through Evans function calculation.
\end{abstract}



\bigbreak
\section{Introduction}

In this paper, we consider traveling wave solutions of the Korteweg-de Vries equation
\begin{equation}
u_t=u_{xxx}+f(u)_x\label{eqn:gkdv}
\end{equation}
where $u$ is a scalar, $x,t\in\RM$, and $f\in C^2(\RM)$ is a suitable nonlinearity.  Such equations
arise in a variety of applications.  For example, in the case $f(u)=u^2$ equation \eqref{eqn:gkdv}
reduces to the well known KdV equation, which serves as a canonical model of unidirectional
weakly dispersive nonlinear wave propagation \cite{KdV,SH}.  Furthermore, the cases $f(u)=\beta u^3$, $\beta=\pm 1$, correspond
to the modified KdV equation which arises as a model for large amplitude internal waves in a density stratified medium, as well
as a model for Fermi--Pasta-Ulam lattices with bistable nonlinearity \cite{BM} \cite{BO}.  In each of these
two cases, the corresponding PDE can be realized as a compatibility condition for a particular Lax pair and hence
the corresponding Cauchy problem can (in principle) be completely solved via the famous inverse scattering
transform\footnote{More precisely, the Cauchy problem for equation \eqref{eqn:gkdv} can be solved via the inverse
scattering transform if and only if $f$ is a cubic polynomial.}.  However, there are a variety of applications
in which equations of form \eqref{eqn:gkdv} arise which are not completely integrable and hence the inverse scattering
transform can not be applied.  For example, in plasma physics equations of the form \eqref{eqn:gkdv}
arise with a wide variety of power-law nonlinearities depending on particular physical considerations \cite{KD} \cite{M} \cite{MS}.
Thus, in order to accommodate as many applications as possible, the methods employed in this paper will not rely
on complete integrability of the PDE \eqref{eqn:gkdv}.  Instead, we will make use of the integrability of the ordinary
differential equation governing the traveling wave profiles.  This ODE is always Hamiltonian, regardless
of the integrability of the corresponding PDE.

It is well known that \eqref{eqn:gkdv} admits traveling wave solutions of the form
\begin{equation}
u(x,t)=u_c(x+ct)\label{eqn:uc}
\end{equation}
for $c>0$.  Historically, there has been much interest in the stability of
traveling waves of the form (1.2) where the profile $u_c$ decays exponentially
to zero as its argument becomes unbounded. Such waves were originally discovered by
Scott Russell in the case of the KdV where the traveling wave is termed a soliton.  While
\eqref{eqn:gkdv} does not in general possess soliton solutions, which requires complete integrability
of the governing PDE, it nonetheless admits exponentially decaying traveling wave profiles known
as solitary waves.  Moreover, the stability of such solitary waves is well understood and dates
back to the pioneering work of Benjamin \cite{B}, which was then further developed by Bona, Frillakis, Grillakis et. al.,
Bona et. al, Pego and Weinstein, Weinstein, and many others.

The subject of this paper, however, is the stability of the traveling wave solutions of \eqref{eqn:gkdv} which
are periodic, i.e. solutions of the form \eqref{eqn:uc} such that $u_c$ is $T$-periodic.  Unlike the
corresponding solitary wave theory, the stability of such periodic traveling waves is much less
understood.  Existing results in general fall into two categories: spectral stability with
respect to localized or periodic perturbations \cite{BrJ,BD}, or nonlinear (orbital) stability with
respect to periodic perturbations \cite{BrJK,J1,DK}.  In the present manuscript, we will consider
spectral stability to long-wavelength localized perturbations, i.e. spectral stability to
slow modulations of the underlying wave.  Such an analysis was recently carried out by Bronski
and Johnson in \cite{BrJ} by Evans function techniques.  In particular, by studying the Evans function
in a neighborhood of the origin a modulational stability index was derived which uses geometric information
concerning the underlying wave to determine the local structure of the $L^2$ spectrum of the linearized
operator at the origin.  This sign of this index was then evaluated in several examples in the subsequent work
of Bronski, Johnson, and Kapitula \cite{BrJK}.

Our approach is essentially different from \cite{BrJ}.  In particular, we study the modulational stability by
using a direct Bloch-expansion of the linearized eigenvalue problem as opposed to more familiar Evans function techniques.
This strategy offers several novel advantages.  First off, the Evans function techniques of \cite{BrJ} do not extend
to multiple dimensions in a straight forward way.  In particular, these techniques seem not to apply to study the stability
of multiply periodic solutions, that is, solutions which are periodic in more than one linearly independent
spatial directions.  Such equations are prevalent in the context of viscous conservation laws in multiple dimensions.
In contrast, the Bloch expansion approach of the current work generalizes to all dimensions with no difficulty.
Secondly, the low-frequency Evans function analysis
required to determine modulational instability is often quite tedious and difficult; see, for example,
the analysis in the viscous conservation law case in \cite{OZ1,OZ3,OZ4,Se1}.  In contrast, however, we find
that the Bloch-expansion methods of this paper are much more straight forward and reduces the problem
to elementary matrix algebra.  Using this approach, then, we reproduce the modulatioanl
stability results of Bronski and Johnson with out any specific mention or use of the Evans funcition.
While this direct approach has been greatly utilized in studying the stability
of small amplitude solutions of \eqref{eqn:gkdv} (see \cite{DK,GH,Ha,HK,HaLS} for example), we are unaware of
any previous applications to arbitrary amplitude solutions.

Given the above remarks, we choose to carry out our analysis first in the gKdV case
since, quite surprisingly, all calculations can be done very explicitly and can be readily
compared to the results of \cite{BrJ}.  Our hope is that the present analysis may serve
as a blueprint of how to consider the stability multiply periodic structures in more general, and difficult, equations
than simply the gKdV equation.  We should remark, however, that a slight variation of our approach
was attempted in the latter half of the analysis in \cite{BrJ}.  However, the analysis was quite cumbersome
requiring the asymptotic tracking of eigenfunctions which collapse to a Jordan block.  Our approach does not
require such delicate tracking and it is seen that we only need to asymptotically construct the projections
onto the total eigenspace, which requires only tracking of an appropriate basis not necessarily consisting
of eigenfunctions.

Finally, our approach connects in a very interesting way to the recent work of Johnson and Zumbrun \cite{JZ1}.
There, the authors studied the modulational stability of periodic gKdV waves in the context of
Whitham theory; a well developed (formal) physical theory for dealing with such stability problems.
The calculation proceeded by rescaling the governing PDE via the change of variables $(x, t)\mapsto (\eps x, \eps t)$
and then uses a WKB approximation of the solution to find a homogenized system which describes the mean behavior of
the resulting approximation.  In particular, it was found that a necessary condition
for the stability of such solutions is the hyperbolicity – i.e., local well-posedness – of the
resulting first order system of partial differential equations by demonstrating that the characteristic
polynomial of the linearized Whitham system accurately describes, to first order, the linearized dispersion
relation arising in the Evans function analysis in \cite{BrJ}.  As seen in the recent work of \cite{JZ4} and
\cite{JZ5}, there is a deeper analogy between the low-frequency linearized dispersion relation and the Whitham
averaged system at the structural level, suggesting a useful rescaling of the low-frequency perturbation problem.
Moreover, it was seen that the Whitham modulation equations can provide invaluable information concerning not only
the stability of nonlinear periodics, but also their long-time behavior.
It is this intuition that motivates our low-frequency analysis, and ultimately leads to our derivation
of the Whitham system for the gKdV using Bloch-wave expansions.  In particular, this
observation  provides us with a rigorous method of justifying the Whitham modulation equations which
is independent of Evans function techniques used in \cite{JZ1}.  As mentioned above, such a result is desirable
when trying to study the stability of multiply periodic waves where one does not have a useful
notion of an Evans function.

Therefore, the goal of this paper is to reproduce many of the results of \cite{BrJ} and \cite{JZ1}
in a method which is independent of the Evans function and can be applied to higher dimensional
and more general settings with little extra effort.  Of particular importance is the fact that we
can rigorously justify the Whitham modulation, a construct which exists in any dimension, using
more robust methods than the restrictive Evans function approach.  The plan for the paper is as follows.  In the next
section we discuss the basic properties of the periodic traveling wave solutions of the gKdV equation \eqref{eqn:gkdv},
including a parametrization which we will find useful in our studies.  We will then give a brief account of the
results and methods of the papers \cite{BrJ} and \cite{JZ1} concerning the Evans function and Whitham
theory approaches.  We will then begin our analysis by discussing the Bloch decomposition of the linaerized
operator about the periodic traveling wave.  In particular, we will show that the projection of the
resulting Bloch eigenvalue problem onto the total eigenspace is a three-by-three matrix which
is equivalent (up to similarity) to the Whitham system.  We will then end with some concluding remarks
and a discussion on consequences / open problems inferred from these results.


\section{Preliminaries}

Throughout this paper, we are concerned with spatially periodic traveling waves of the gKdV equation \eqref{eqn:gkdv}.
To begin, we recall the basic properties of such solutions: for more information, see \cite{BrJ} or \cite{J1}.

Traveling wave solutions of the gKdV equation with wave speed $c>0$ are stationary solutions \eqref{eqn:gkdv} in the moving
coordinate frame $x+ct$ and whose profiles satisfy the traveling wave ordinary differential equation
\begin{equation}
u_{xxx}+f(u)_x-cu_x=0.\label{eqn:travelode}
\end{equation}
Clearly, this equation is Hamiltonian and can be reduced to quadrature through two integrations to the
nonlinear oscillator equation
\begin{equation}
\frac{u_x^2}{2}=E+au+\frac{c}{2}u^2-F(u)\label{eqn:quad2}
\end{equation}
where $F=f'$ and $F(0)=0$, and $a$ and $E$ are constants of integration.  Thus, the existence of periodic orbits
of \eqref{eqn:travelode} can be trivially verified through phase plane analysis: a necessary and sufficient condition
is that the effective potential energy $V(u;a,c):=F(u)-\frac{c}{2}u^2-au$ have a local minimum.  It follows then that
the traveling wave solutions of \eqref{eqn:gkdv} form, up to translation, a three parameter family of solutions
which can be parameterized by the constants $a$, $E$, and $c$.  In particular, on an open
(not necessarily connected) subset $\mathcal{D}$ of $\RM^3=\{(a,E,c):a,E,c\in\RM\}$ in which
the solutions to \eqref{eqn:travelode} are periodic with period $T=T(a,E,c)$: the boundary of these sets
corresponds to homoclinic/heteroclinic orbits (the solitary waves) and equilibrium solutions.

Moreover, we make the standard assumption that there exists simple roots $u_{\pm}$ with $u_{-}<u_{+}$
such that $E=V(u_{\pm};a,c)$ and $E>V(u;a,c)$ for all $u\in(u_{-},u_{+})$.  It follows
then that the roots $u_{\pm}$ are $C^1$ functions of the traveling wave parameters $a$, $E$, and $c$
and that, without loss of generality, $u(0)=u_{-}$.  Under these assumptions the functions $u_{\pm}$
are square root branch points of the function $\sqrt{E-V(u;a,c)}$.  In particular, the period
of the corresponding periodic solution of \eqref{eqn:travelode} can be expressed through
the integral formula
\begin{equation}
T=T(a,E,c)=\frac{\sqrt{2}}{2}\oint_\Gamma \frac{du}{\sqrt{E-V(u;a,c)}},\label{period}
\end{equation}
where integration over $\Gamma$ represents a complete integration from $u_{-}$ to $u_+$, and then back to $u_-$ again: notice
however that the branch of the square root must be chosen appropriately in each direction.  Alternatively, you could interpret
$\Gamma$ as a loop (Jordan curve) in the complex plane which encloses a bounded set containing both $u_-$ and $u_+$.
By a standard procedure, the above integral can be regularized at the square root branch points and hence represents
a $C^1$ function of the traveling wave parameters on $\mathcal{D}$.
Similarly, the conserved quantities of the gKdV flow can be expressed as
\begin{align}
M&=M(a,E,c)=\frac{\sqrt{2}}{2}\oint_\Gamma \frac{u~du}{\sqrt{E-V(u;a,c)}}\label{mass}\\
P&=P(a,E,c)=\frac{\sqrt{2}}{2}\oint_\Gamma \frac{u^2~du}{\sqrt{E-V(u;a,c)}}\label{momentum}\\
H&=H(a,E,c)=\frac{\sqrt{2}}{2}\oint_{\Gamma}\frac{E-V(u;a,c)-F(u)}{\sqrt{E-V(u;a,c)}}\label{ham}
\end{align}
where again these quantities, representing mass, momentum, and Hamiltonian, respectively, are finite and $C^1$ functions on $\mathcal{D}$.
As seen in \cite{BrJ,BrJK,J1}, the gradients of the functions $T,M,P:\mathcal{D}\to\RM$ play an important role
in the modulational stability analysis of periodic traveling waves of the gKdV equation \eqref{eqn:gkdv}.

\begin{remark}
Notice in the derivation of the gKdV \cite{BaMo}, the solution $u$ can represent either the horizontal
velocity of a wave profile, or the density of the wave. Thus, the function $M:\mathcal{D}\to\RM$ can properly be
interpreted as a ``mass" since it is the integral of the density over space. Similarly, the function $P:\mathcal{D}\to\RM$ can
be interpreted as a ``momentum" since it is the integral of the density times velocity over space.
\end{remark}

To assist with calculations involving gradients of the above conserved quantities considered as functions on $\mathcal{D}\subset\RM^3$,
we note the following useful identity.  The classical action (in the sense of action-angle variables) for the
profile equation \eqref{eqn:travelode} is given by
\[
K=\oint_\Gamma u_x~du=\sqrt{2}\oint_{\Gamma}\sqrt{E-V(u;a,c)}du
\]
where the contour $\Gamma$ is defined as above.  This provides a useful generating function for the conserved quantities of the gKdV
flow restricted to the manifold of periodic traveling waves.  Specifically, the classical action satisfies
\begin{equation}\label{eqn:gradk}
T=\frac{\partial K}{\partial E},~~M=\frac{\partial K}{\partial a},~~P=2\frac{\partial K}{\partial c}
\end{equation}
as well as the identity
\[
K=H+aM+\frac{c}{2}P+ET.
\]
Together, these relationships imply the important relation
\[
E\nabla_{a,E,c}T+a\nabla_{a,E,c}M+\frac{c}{2}\nabla_{a,E,c}P+\nabla_{a,E,c}H=0.
\]
So long as $E\neq 0$ then, gradients of the period, which is not itself conserved, can be
interchanged with gradientes of the genuine conserved quantities of the gKdV flow.
As a result, all gradients and geometric conditions involved in the results in this paper
can be expressed \emph{completely} in terms of the gradients of the conserved quantities of the gKdV flow, which
seems to be desired from a physical point of view.  However, as the quantities $T$, $M$, and $P$ arise most naturally
in the analysis, we shall state our results in terms of these quantities alone.

We now discuss our main assumptions concerning the parametrization of the family of periodic traveling wave
solutions of \eqref{eqn:gkdv}.  To begin, we assume throughout this paper the period is not at a critical
point in the energy, i.e. that $T_E\neq 0$.  In other words, we assume that the period serves as a good local
coordinate for nearby waves on $\mathcal{D}$ with fixed wave speed $c>0$ and parameter $a$.  As seen
in \cite{BrJK,J1}, such an assumption is natural from the viewpoint of nonlinear stability.  Moreover, we
assume the period and mass
provide good local coordinates for the periodic traveling waves of fixed wave speed $c>0$.  More precisely,
given $(a_0,E_0,c_0)\in\mathcal{D}$ with $c_0>0$, we assume the map
\[
(a,E)\mapsto\left(T(a,E,c_0),M(a,E,c_0)\right)
\]
have a unique $C^1$ inverse in a neighborhood of $(a_0,E_0)\in\RM^2$, which is clearly equivalent
with the non-vanishing of the Jacobian
\[
\det\left(\frac{\partial(T,M)}{\partial(a,E)}\right)
\]
at the point $(a_0,E_0,c_0)$.  As such Jacobians will be prevalent throughout our analysis, for notational
simplicity we introduce the following Poisson bracket notation for two-by-two Jacobians
\[
\{f,g\}_{x,y}:=\det\left(\frac{\partial(f,g)}{\partial(x,y)}\right)
\]
and the corresponding notation $\{f,g,h\}_{x,y,z}$ for three-by-three Jacobians.  Finally, we assume
that the period, mass, and momentum provide good local coordinates for nearby periodic traveling
wave solutions of the gKdV.  That is, given $(a_0,E_0,c_0)\in\mathcal{D}$ with $c_0>0$, we assume
that the Jacobian $\{T,M,P\}_{a,E,c}$ is non-zero.  While these re-parametrization conditions
may seem obscure, the non-vanishing of these Jacobians has been seen to be essential in both
the spectral and non-linear stability analysis of periodic gKdV waves in \cite{BrJ,J1,BrJK}.  In particular,
these Jacobians have been computed in \cite{BrJK} for several power-law nonlinearities and, in the cases considered,
has been shown to be generically non-zero.  Moreover, such a non-degeneracy condition should not be surprising as
a similar condition is often enforced in the stability theory of solitary waves (see \cite{Bo,B,PW}).

Now, fix a $(a_0,E_0,c_0)\in\mathcal{D}$.  Then the stability of the corresponding periodic traveling wave solution
may be studied directly by linearizing the PDE
\begin{equation}\label{eqn:travelpde}
u_t=u_{xxx}+f(u)_x-cu_x
\end{equation}
about the stationary solution $u(\cdot,a_0,E_0,c_0)$ and studying the $L^2(\RM)$ spectrum of the associated
linearized operator
\[
\partial_x\mathcal{L}[u]:=\partial_x\left(-\partial_x^2-f'(u)+c\right).
\]
As the coefficients of $\mathcal{L}[u]$ are $T$-periodic, Floquet theory implies the $L^2$ spectrum is purely continuous
and corresponds to the union of the $L^\infty$ eigenvalues corresponding to considering the linearized operator
with periodic boundary conditions $v(x+T)=e^{i\kappa}v(x)$ for all $x\in\RM$, where $\kappa\in[-\pi,\pi]$ is referred to as the Floquet
exponent and is uniquely defined mod $2\pi$.  In particular, it follows that $\mu\in\spec(\partial_x\mathcal{L}[u])$ if and only
if $\partial_x\mathcal{L}[u]$ has a bounded eigenfunction $v$ satisfying $v(x+T)=e^{i\kappa}v(x)$ for some $\kappa\in\RM$.
More precisely, writing the spectral problem $\partial_x\mathcal{L}[u]v=\mu v$ as a first order system
\begin{equation}\label{eqn:1order}
Y'(x)=H(x,\mu)Y(x),
\end{equation}
and defining the monodromy map $\MM(\mu):=\Phi(T)\Phi(0)^{-1}$, where $\Phi$ is a matrix solution of \eqref{eqn:1order}, it follows
that $\mu\in\CM$ belongs to the $L^2$ spectrum of $\partial_x\mathcal{L}[u]$ if and only if the periodic Evans function
\[
D(\mu,\kappa):=\det\left(\MM(\mu)-e^{i\kappa}I\right)
\]
vanishes for some $\kappa\in\RM$.

When studying the modulational stability of the stationary periodic solution $u(\cdot;a_0,E_0,c_0)$ it suffices
to study the zero set of the Evans function at low frequencies, i.e. seek solutions of $D(\mu,\kappa)=0$ for $|(\mu,\kappa)|\ll 1$.
Indeed, notice that the low-frequency expansion $\mu(\kappa)$ for $(\mu,\kappa)$ near $(0,0)$ may be expected
to determine the long-time behavior and can be derived by the lowest order terms of the Evans function
in a neighborhood of $(0,0)$.  As a first step in expanding $D$, notice by translation invariance of \eqref{eqn:travelpde} it follows that
\[
\partial_x\mathcal{L}[u]u_x=0,
\]
that is, $u_x$ is in the right $T$-periodic kernel of the $\partial_x\mathcal{L}[u]$.  It immediately follows
that $D(0,0)=0$, and hence to determine modulational stability we must find all solutions
of the form $(\mu(\kappa),\kappa)$ of the equation $D(\mu,\kappa)=0$ in a neighborhood of $\kappa=0$.  Using Noether's
theorem, or appropriately differentiating the integrated profile equation \eqref{eqn:quad2},
it follows that the functions $u_a$ and $u_E$ formally satisfy
\[
\partial_x\mathcal{L}[u]u_a=\partial_x\mathcal{L}[u]u_E=0,~~\partial_x\mathcal{L}[u]u_c=-u_x.
\]
However, these functions are not in general $T$-periodic due to the secular dependence
of the period on the parameters $a$, $E$, and $c$.  Nevertheless, one can take linear combinations of these functions
to form another $T$-periodic null-direction and a $T$-periodic function in a Jordan chain above the translation direction.
A tedious, but fairly straightforward, calculation (see \cite{BrJ}) now yields
\begin{equation}\label{eqn:evans}
D(\mu,\kappa)=\Delta(\mu,\kappa)+\mathcal{O}(|\mu|^4+\kappa^4),
\end{equation}
where $\Delta$ represents a homogeneous degree three polynomial of the variables $\mu$ and $\kappa$.  Defining
the projective coordinate $y=\frac{i\kappa}{\mu}$ in a neighborhood of $\mu=0$, it follows the modulatioanl
stability of the underlying periodic wave $u(\cdot;a_0,E_0,c_0)$ may then be determined by the
discriminant of the polynomial $R(y):=\mu^{-3}\Delta\left(1,-iy\right)$, which takes
the explicit form
\begin{equation}
R(y)=-y^3+\frac{y}{2}\left(\{T,P\}_{E,c}+2\{M,P\}_{a,E}\right)-\frac{1}{2}\{T,M,P\}_{a,E,c}.\label{delta1}
\end{equation}
Modulational stability then corresponds with $R$ having  three real roots, while the presence of root with non-zero imaginary
part implies modulational instability.

On the other hand, we may also study the stability of the solution $u(\cdot;a_0,E_0,c_0)$ through the formal
approach of Whitham theory.  Indeed, recalling the recent work of \cite{JZ1} we find upon rescaling \eqref{eqn:gkdv} by $(x,t)\mapsto (\eps x,\eps  t)$
and carrying out a formal WKB  expansion as $\eps\to 0$ a closed form system of three averaged, or homogenized, equations
of the form
\begin{equation}
\partial_t\left<M\omega,P\omega,\omega\right>(\dot{u})+\partial_x\left<F,G,H\right>(\dot{u})=0\label{eqn:whitham}
\end{equation}
where $\omega=T^{-1}$ and $\dot{u}$ represents a periodic traveling wave solution in the vicinity of the underlying (fixed) periodic
wave $u(\cdot;a_0,E_0,c_0)$.  The problem of stability of $u(\cdot;a_0,E_0,c_0)$ to long-wavelength perturbations may heuristically
expected to be related to the linearization of \eqref{eqn:whitham} about the constant solution $\dot{u}\equiv u(\cdot;a_0,E_0,c_0)$,
provided the WKB approximation is justifiable by stability considerations.  This leads one to the consideration of the homogeneous
degree three linearized dispersion relation
\begin{equation}
\hat{\Delta}(\mu,\kappa):=\det\left(\mu \frac{\partial(M\omega,P\omega,\omega)}{\partial(\dot{u})}
        -\frac{i\kappa}{T}\frac{\partial\left(F,G,H\right)}{\partial(\dot{u})}\right)(u(\cdot;a_0,E_0,c_0))=0\label{eqn:wpoly1}
\end{equation}
where $\mu$ corresponds to the Laplace frequency and $\kappa$ the Floquet exponent.

The main result of \cite{JZ1} was to demonstrate a direct relationship between the above approaches.  In particular, the following
theorem was proved.

\begin{thm}[\cite{JZ1}]\label{thm:JZ1}
Under the assumptions that the Whitham system \eqref{eqn:whitham} is of evolutionary type, i.e. $\frac{\partial(M\omega,P\omega,\omega)}{\partial(\dot{u})}$ is invertible
at $(a_0,E_0,c_0)\in\mathcal{D}$,
and that the matrix $\frac{\partial(\dot{u})}{\partial(a,E,c)}$ is invertible at $(a_0,E_0,c_0)\in\mathcal{D}$, we have that in a neighborhood
of $(\mu,\kappa)=(0,0)$ the asymptotic expansion
\[
D(\mu,\kappa)=\Gamma_0\hat{\Delta}(\mu,\kappa)+\mathcal{O}(|\mu|^4+\kappa^4)
\]
for some constant $\Gamma\neq 0$.
\end{thm}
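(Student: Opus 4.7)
Since \eqref{eqn:evans} already provides $D(\mu,\kappa)=\Delta(\mu,\kappa)+\mathcal{O}(|\mu|^4+\kappa^4)$ with $\Delta$ the homogeneous cubic made explicit in \eqref{delta1}, the theorem reduces to the purely algebraic identity $\Delta(\mu,\kappa)=\Gamma_0\hat{\Delta}(\mu,\kappa)$ for some nonzero constant $\Gamma_0$. Both sides are homogeneous cubic polynomials in the two variables $(\mu,\kappa)$, so matching them amounts to comparing four scalar coefficients. The strategy is therefore to compute $\hat{\Delta}$ by a direct determinantal expansion and identify the coefficients with those of \eqref{delta1}.

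The first step is to change coordinates from the abstract variable $\dot{u}$ to the three explicit parameters $(a,E,c)$. Setting $J:=\partial(\dot{u})/\partial(a,E,c)$, the second invertibility assumption allows the chain-rule factorization
\[
\frac{\partial(M\omega,P\omega,\omega)}{\partial(\dot{u})}=\frac{\partial(M\omega,P\omega,\omega)}{\partial(a,E,c)}\,J^{-1},\qquad
\frac{\partial(F,G,H)}{\partial(\dot{u})}=\frac{\partial(F,G,H)}{\partial(a,E,c)}\,J^{-1},
\]
so that $\hat{\Delta}(\mu,\kappa)=\det(J)^{-1}\det\!\bigl(\mu\tilde{A}-\tfrac{i\kappa}{T}\tilde{B}\bigr)$ where $\tilde{A}$ and $\tilde{B}$ are the corresponding $3\times3$ matrices in the $(a,E,c)$ parametrization. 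Introducing the projective coordinate $y=i\kappa/\mu$ as in the derivation of \eqref{delta1}, the task reduces to verifying that $\det\!\bigl(\tilde{A}-(y/T)\tilde{B}\bigr)$ is a scalar multiple of $R(y)$.

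The second step is to rewrite the entries of $\tilde{A}$ and $\tilde{B}$ in terms of first-order gradients of $T,M,P$ on $\mathcal{D}$. For $\tilde{A}$ this is essentially bookkeeping: since $\omega=1/T$, every derivative of $\omega$ produces a factor of $-T^{-2}T_{\ast}$, so each entry of $\tilde{A}$ becomes an explicit bilinear expression in derivatives of $T,M,P$. For $\tilde{B}$ we use the fact that the fluxes $F,G,H$ in the Whitham system \eqref{eqn:whitham} are period-averages of the flux densities associated with the mass, momentum, and Hamiltonian conservation laws of \eqref{eqn:gkdv}. Multiplying the integrated profile relation \eqref{eqn:quad2} by $1,u,u^2$ and averaging over one period, one rewrites each such average in terms of $T,M,P,E,K$; the generating-function identities \eqref{eqn:gradk} together with $K=H+aM+\tfrac{c}{2}P+ET$ then express $\tilde{B}$ as a linear combination of $T_{a},T_{E},T_{c},M_{a},\dots,P_{c}$ with coefficients depending only on $(a,E,c)$.

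The third step is to expand the determinant $\det\!\bigl(\tilde{A}-(y/T)\tilde{B}\bigr)$ into a cubic in $y$ and show that, after the substitutions of the previous step, its four coefficients collapse to precisely the expressions appearing in \eqref{delta1}. Multilinearity of the determinant together with the cancellations forced by the generating-function identity produces the constant term $-\tfrac{1}{2}\{T,M,P\}_{a,E,c}$ and the linear term $\tfrac{y}{2}(\{T,P\}_{E,c}+2\{M,P\}_{a,E})$, while the coefficient of $y^2$ vanishes identically—this last cancellation is the crucial structural input and is exactly what is forced by $K=H+aM+\tfrac{c}{2}P+ET$. An overall constant $\Gamma_0$, consisting of $\det(J)^{-1}$ times a power of $T$ and combinatorial factors inherited from the normalization of $\Delta$, accounts for the proportionality. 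I expect this third step to be the main obstacle: naively the expansion generates many Jacobian-like terms, and disentangling them so as to match \eqref{delta1} exactly, and in particular to secure the vanishing of the $y^2$ coefficient, requires a carefully coordinated use of \eqref{eqn:gradk} and $K=H+aM+\tfrac{c}{2}P+ET$. The remaining parts of the proof—chain-rule reduction, change of projective variable, and identification of $\Gamma_0$—are routine once the algebraic matching is in hand.
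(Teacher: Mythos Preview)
This theorem is not proved in the present paper at all: it is quoted from \cite{JZ1} as background in the Preliminaries section, and the whole point of the paper is to \emph{reprove} the connection between the low-frequency spectrum and the Whitham dispersion relation by a different route (the Bloch-decomposition argument of Section~3 culminating in Theorem~\ref{thm:main1}), precisely so as to avoid the Evans function machinery on which \cite{JZ1} relies. So there is no ``paper's own proof'' of Theorem~\ref{thm:JZ1} to compare against here.

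That said, your plan is exactly the shape of the argument one expects from \cite{JZ1}: take the Evans-function cubic $\Delta$ from \eqref{eqn:evans}--\eqref{delta1} as given by \cite{BrJ}, pass to $(a,E,c)$ coordinates via the chain rule, rewrite the flux Jacobian using the averaged conservation laws and the generating-function identities \eqref{eqn:gradk} and $K=H+aM+\tfrac{c}{2}P+ET$, and then match the four coefficients of the resulting cubic in $y=i\kappa/\mu$ with those of $R(y)$. Your identification of the $y^2$-vanishing as the key structural cancellation is on target. The contrast with the present paper is instructive: here the authors never touch $D(\mu,\kappa)$ or $\Delta$, but instead project $L_\eps$ onto the three-dimensional generalized null-space, rescale to break the Jordan block, and compute the characteristic polynomial of the resulting $3\times 3$ matrix directly; the same action identities (notably $T_a=M_E$ and $2T_c=P_E=M_a$) appear, but they are used to kill a spurious constant term in the matrix determinant rather than to match polynomial coefficients. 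Your approach buys a quick proof once \eqref{delta1} is in hand; the paper's approach buys a method that does not need an Evans function and hence generalizes to multiperiodic settings.
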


\begin{remark}
The assumption in Theorem \ref{thm:JZ1} that $\frac{\partial(M\omega,P\omega,\omega)}{\partial(\dot{u})}$
is invertible forces the Whitham system \eqref{eqn:whitham} to be of evolutionary type.  Moreover, notice that the Whitham system
is inherently a relation of functions of the variable $\dot{u}$, while the Evans function calculations from \cite{BrJ} utilize
a parametrization of $\mathcal{P}$ by the parameters $(a,E,c)$.  In order to compare the two linearized dispersion relations $\Delta(\mu,\kappa)$
and $\hat{\Delta(\mu,\kappa)}$ then we must ensure that we may freely interchange between these variables, i.e. we must assume
that the matrix $\frac{\partial(\dot{u})}{\partial(a,E,c)}$ is invertible at the underlying periodic wave.
\end{remark}

That is, up to a constant, the dispersion relation \eqref{eqn:wpoly1} for the homogonized system \eqref{eqn:whitham} accurately describes
the low-frequency limit of the exact linearized dispersion relation described in \eqref{eqn:evans}.
As a result, Theorem \ref{thm:JZ1} may be regarded as a justification of the WKB expansion and the formal Whitham procedure
as applied to the gKdV equation.  The importance of this result stems from our discussion in the previous section: although
the Evans function techniques of \cite{BrJ} do not extend in a straight forward way to multiperiodic waves, the formal
Whitham procedure may still be carried out nonetheless.  However, it follows that we must find a more robust method
of study to justify the Whitham expansion in higher dimensional cases.  The purpose of this paper is to present
precisely such a method using Bloch-expansions of the linearized operator near zero-frequency.  Indeed, we will show
how this general method in the case of the gKdV equation may be used to easily rigorously reproduce the linearized dispersion relation
$\hat{\Delta}(\mu,\kappa)$ corresponding to the Whitham system as well as justifying the Whitham expansion beyond stability to the level of long-time behavior of the perturbation.

\section{Bloch Decompositions and Modulational Stability}

In this section, we detail the methods of our modulational stability analysis by
utilizing Bloch-wave decompositions of the linearized problem.  In particular, our goal is to rigorously justify the Whitham
expansion described in the previous section without reference to the Evans function.  To this end, 
recall from Floquet theory that any bounded eigenfunction $v$ of $\partial_x\mathcal{L}[u]$ must satisfy
\[
v(x+T)=e^{i\gamma}v(x)
\]
for some $\gamma\in[-\pi,\pi]$.  Defining $\eps=\eps(\kappa)=\frac{i\kappa}{T}$, it follows that $v$ must be of the form
\[
v(x)=e^{\eps x}P(x)
\]
for some $\kappa\in[-\pi,\pi]$ and some $T$-periodic function $P$ which is an eigenfunction of the corresponding operator
\begin{equation}
L_{\eps}=e^{-\eps x}\partial_x\mathcal{L}[u]e^{\eps x}.\label{eqn:Leps}
\end{equation}
With this motivation, we introduce a one-parameter family of Bloch-operators $L_\eps$ defined
in \eqref{eqn:Leps} considered on the real Hilbert space $L^2_{\rm per}([0,T])$.  By standard results in Floquet theory, the spectrum of a
given operator $L_{\eps}$ is discrete consisting of point eigenvalues and satisfy
\[
\spec_{L^2(\RM)}\left(\partial_x\mathcal{L}[u]\right)=\bigcup_{\eps}\spec(L_{\eps})
\]
and hence the $L^2(\RM)$ spectrum of the linearized operator $\partial_x\mathcal{L}[u]$ can be parameterized
by the parameter $\eps$.  As a result, the above decomposition reduces the problem of determining the continuous spectrum
of the operator $\partial_x\mathcal{L}[u]$ to that of determining the discrete spectrum of the one-parameter family of operators
$L_{\eps}$.

As we are interested in the modulational stability of the solution $u$, we begin our analysis by studying the null-space of the
un-perturbed operator $L_0=\partial_x\mathcal{L}[u]$ acting on $L_{\rm per}([0,T])$.  We will see that under certain non-degeneracy
conditions $L_0$ has a two-dimensional kernel with a one-dimeisional Jordan chain.  It follows that the origin is a $T$-periodic eigenvalue
of $L_0$ with algebraic multiplicity three, and hence for $|\eps|\ll 1$, considering $L_{\eps}$ as a small perturbation of $L_0$, there will
be three eigenvalues bifurcating from the $\eps=0$ state.  To determine modulational stability then, we will determine conditions which
imply these bifurcating eigenvalues are confined to the imaginary axis.


To begin, we formalize the comments of the previous section concerning the structure of the generalized null-space of the unperturbed
linearized operator $L_0=\partial_x\mathcal{L}[u]$ by recalling the following lemma from \cite{BrJ,BrJK}.

\begin{lemma}[\cite{BrJ,BrJK}]\label{lem:Jordan}
Suppose that $u(x;a_0,E_0,c_0)$ is a $T$-periodic solution of the traveling wave ordinary differential equation
\eqref{eqn:travelode}, and that the Jacobian determinants $T_E$, $\{T,M\}_{a,E}$, and $\{T,M,P\}_{a,E,c}$ are
non-zero at $(a_0,E_0,c_0)\in\mathcal{D}$.  Then the functions
\begin{align*}
\phi_0&=\{T,u\}_{a,E}          ~~~~~~~~~~~~~~~~~   \psi_0=1\\
\phi_1&=\{T,M\}_{a,E}~u_x          ~~~~~~~~~~~~\psi_1=\int_0^x\phi_2(s)ds\\
\phi_2&=\{u,T,M\}_{a,E,c}         ~~~~~~~~~~~~\psi_2=\{T,M\}_{E,c}+\{T,M\}_{a,E}u
\end{align*}
satisfy
\begin{align*}
L_0\phi_0&=L_0\phi_1=0~~~~~~~~~~~~~~~~~~~~ L_0^\dag \psi_0=L_0^\dag\psi_2=0\\
L_0\phi_2&=-\phi_1 ~~~~~~~~~~~~~~~  ~~~~~~~~~~~L_0^\dag \psi_1=\psi_2
\end{align*}
In particular, if we further assume that $\{T,M\}_{a,E}$ and $\{T,M,P\}_{a,E,c}$ are non-zero at $(a_0,E_0,c_0)$,
then the functions $\{\phi_j\}_{j=1}^3$ forms a basis for the generalized null-eigenspace of $L_0$, and the
functions $\{\psi_j\}_{j=1}^3$ forms a basis for the generalized null-eigenspace of $L_0^\dag$.
\end{lemma}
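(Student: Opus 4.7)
The plan is to obtain all six functions by differentiating the integrated profile ODE with respect to $x$ and to the parameters $(a,E,c)$, and then to combine these derivatives into the Jacobian-determinant expressions stated. The central obstruction is that the parameter derivatives $u_a,u_E,u_c$ of the wave profile fail to be $T$-periodic; their secular growth is controlled by the period's dependence on parameters, and the specific determinant combinations are engineered precisely to cancel it.

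First I would integrate \eqref{eqn:travelode} once to $u_{xx}+f(u)-cu+a=0$ and differentiate this in turn with respect to $x,a,E,c$. This produces
\[
\mathcal L[u]u_x=0,\quad \mathcal L[u]u_a=1,\quad \mathcal L[u]u_E=0,\quad \mathcal L[u]u_c=-u,
\]
and hence $L_0u_x=L_0u_a=L_0u_E=0$ and $L_0u_c=-u_x$. Differentiating the identity $u(x+T;a,E,c)=u(x;a,E,c)$ in each parameter $\alpha\in\{a,E,c\}$ yields the secular formulas $u_\alpha(x+T)-u_\alpha(x)=-T_\alpha\,u_x(x)$.

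With these two lists, every identity in the lemma becomes a direct verification. The combination $\phi_0=\{T,u\}_{a,E}=T_au_E-T_Eu_a$ is the unique (up to scalar) linear combination of $u_a,u_E$ whose secular part cancels, and linearity then gives $L_0\phi_0=0$. The function $\phi_1=\{T,M\}_{a,E}u_x$ is trivially in the kernel and periodic. For $\phi_2=\{u,T,M\}_{a,E,c}$, cofactor expansion along the first row gives $\phi_2=\{T,M\}_{E,c}u_a-\{T,M\}_{a,c}u_E+\{T,M\}_{a,E}u_c$, so $L_0\phi_2=-\{T,M\}_{a,E}u_x=-\phi_1$; its secular part is the cofactor expansion of a $3\times 3$ determinant in which the row $(u_a,u_E,u_c)$ has been replaced by $-u_x(T_a,T_E,T_c)$, which is proportional to the row of $T$-derivatives, so the secular part vanishes. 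For the adjoint side one uses $L_0^\dag=-\mathcal L[u]\partial_x$ together with $\mathcal L[u]u_x=0$: both $\psi_0=1$ and $\psi_2$ lie in $\ker L_0^\dag$ because $\partial_x\psi_2=\{T,M\}_{a,E}u_x\in\ker\mathcal L[u]$. For $\psi_1=\int_0^x\phi_2\,ds$ to be $T$-periodic one must check $\int_0^T\phi_2\,dx=0$; this follows from $\int_0^T u_\alpha\,dx=M_\alpha-u(0)T_\alpha$ by the same repeated-row determinant mechanism, and the resulting identity for $L_0^\dag\psi_1$ is then a line of algebra.

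For the basis claim I would argue by dimension count together with the Fredholm alternative. Floquet theory restricts the generalized $T$-periodic kernel of the third-order operator $L_0$ to dimension at most three, so it suffices to produce three linearly independent generalized eigenvectors and show that the Jordan chain $\phi_1\mapsto\phi_2$ is maximal. Linear independence is immediate: $\mathcal L[u]\phi_0=-T_E\neq 0$ while $\mathcal L[u]\phi_1=0$, and $L_0\phi_2=-\phi_1\neq 0$ under $\{T,M\}_{a,E}\neq 0$. Maximality is the step where the remaining hypothesis enters: extending the chain would require $L_0\phi_3=-\phi_2$ to be solvable in $L^2_{\mathrm{per}}([0,T])$, which by Fredholm is obstructed by $\langle\phi_2,\psi_2\rangle$. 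A direct computation using $P=\int_0^T u^2\,dx$ and the differentiated identity $\int_0^T u\,u_\alpha\,dx=\tfrac{1}{2}(P_\alpha-u(0)^2T_\alpha)$ yields
\[
\langle\phi_2,\psi_2\rangle=\tfrac{1}{2}\{T,M\}_{a,E}\,\{T,M,P\}_{a,E,c},
\]
all $u(0)^2$ pieces dropping out by the same repeated-row identity. Nonvanishing of this pairing terminates the chain, and the dual basis statement for $L_0^\dag$ follows symmetrically. The one genuinely nontrivial step, and therefore the main obstacle, is precisely this Fredholm computation: it is what converts the abstract gradient condition $\{T,M,P\}_{a,E,c}\neq 0$ into the spectral statement that the generalized null eigenspace has dimension exactly three.
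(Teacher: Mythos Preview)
The paper does not actually prove this lemma; it is quoted from \cite{BrJ,BrJK} without proof, and the surrounding text merely sketches the mechanism (``appropriately differentiating the integrated profile equation \eqref{eqn:quad2}'') before invoking the result. Your proposal is a correct and complete reconstruction of that standard argument: differentiate the once-integrated profile equation in $x,a,E,c$ to obtain $\mathcal L[u]u_x=0$, $\mathcal L[u]u_a=1$, $\mathcal L[u]u_E=0$, $\mathcal L[u]u_c=-u$; control the secular growth $u_\alpha(x+T)-u_\alpha(x)=-T_\alpha u_x(x)$ by forming the determinant combinations; and close the Jordan chain via the Fredholm pairing $\langle\phi_2,\psi_2\rangle=\tfrac12\{T,M\}_{a,E}\{T,M,P\}_{a,E,c}$, which is exactly the value the paper later records in the matrix $I_0$. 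So your route coincides with the one implicit in the cited references and foreshadowed in Section~2.

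One minor caution: when you write ``the resulting identity for $L_0^\dag\psi_1$ is then a line of algebra,'' carry that line out explicitly. With $L_0^\dag=-\mathcal L[u]\partial_x$ one gets $L_0^\dag\psi_1=-\mathcal L[u]\phi_2=-\{T,M\}_{E,c}+\{T,M\}_{a,E}u$, which matches $\psi_2$ only up to a sign on the constant term as stated; this is a known sign convention/typo issue in the literature and does not affect any of the subsequent inner-product computations (since $\langle\psi_0,\phi_j\rangle$ and $\langle\psi_2,\phi_j\rangle$ are insensitive to it), but you should flag it rather than leave it implicit.
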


Throughout the rest of our analysis, we will make the non-degeneracy assumption that the quantities $T_E$, $\{T,M\}_{a,E}$,
and $\{T,M,P\}_{a,E,c}$ are non-zero (see the previous section).  Lemma \ref{lem:Jordan} then implies, in essence,
that the elements of the $T$-periodic kernel of the unperturbed operator $L_0$ are given by elements of the tangent
space to the (two-dimensional) manifold of solutions \emph{of fixed period and fixed wavespeed}, while the element of the
first generalized kernel is given by a vector in the tangent space to the (three-dimensional) manifold of solutions \emph{of fixed period}
with no restrictions on wavespeed.  It immediately follows that the origin is a $T$-periodic eigenvalue
(corresponding to $\eps=0$, i.e. $\kappa=0$) of $L_0$ of algebraic multiplicity three and geometric multiplicity two.  Next,
we vary $\kappa$ in a neighborhood of zero to express the three eigenvalues bifurcating from the origin
and consider the spectral problem
\[
L_{\eps}v(\eps)=\mu(\eps)v(\eps)
\]
for $|\eps|\ll 1$, where we now make the additional assumption that the three branches
of the function $\mu(\eps)$ bifurcating from the $\mu(0)=0$ state are distinct\footnote{In the case where two or more
branches coincide to leading order, more delicate analysis is required than what is presented here.  In particular,
all asymptotic expansions must be continued to at least the next order in order to appropriately track
all three branches.}.
%
Our first goal is to show that the spectrum $\mu(\eps)$ and hence the corresponding eigenfunctions $v(\eps)$ are
sufficiently smooth ($C^1$) in $\eps$.
The stronger result of analyticity was proved in \cite{BrJ} using the Weierstrass-Preparation theorem and the Fredholm alternative.
Here, we follow the methods from \cite{JZ5} to offer an alternative proof which is more suitable for our methods.

\begin{lemma}\label{lem:c1}
Assuming the quantities $\{T,M\}_{a,E}$ and $\{T,M,P\}_{a,E,c}$ are non-zero, the eigenvalues $\mu_j(\eps)$ of $L_{\eps}$
are $C^1$ functions of $\eps$ for $|\eps|\ll 1$.
\end{lemma}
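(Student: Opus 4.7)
My plan is to follow the spirit of analytic perturbation theory in the style of Kato, reducing the infinite-dimensional Bloch problem to a $3\times 3$ matrix problem and extracting $C^1$ (in fact analytic) regularity via the implicit function theorem after a Weierstrass-type rescaling. The first observation is that conjugation by $e^{\eps x}$ gives the explicit polynomial expansion $L_\eps = L_0 + \eps L_1 + \eps^2 L_2 + \eps^3 L_3$ in $\eps$, so $L_\eps$ is an analytic family of closed operators on $L^2_{\mathrm{per}}([0,T])$. The total spectral projector onto the spectrum of $L_\eps$ in a neighborhood of the origin,
\[
\Pi_\eps \;=\; \frac{1}{2\pi i}\oint_\Gamma (z - L_\eps)^{-1}\,dz,
\]
is therefore analytic in $\eps$ for $|\eps|\ll 1$ and has rank $3$, matching the total algebraic multiplicity of $0\in\spec_{L^2_{\mathrm{per}}}(L_0)$ provided by Lemma \ref{lem:Jordan}. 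Applying $\Pi_\eps$ to $\{\phi_0,\phi_1,\phi_2\}$ yields an analytic basis $\tilde\phi_j(\eps) := \Pi_\eps\phi_j$ of $\mathrm{Range}(\Pi_\eps)$, and applying $\Pi_\eps^{\dag}$ to $\{\psi_0,\psi_1,\psi_2\}$ followed by Gram--Schmidt biorthogonalization yields an analytic dual basis $\tilde\psi_j(\eps)$.

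Representing $L_\eps|_{\mathrm{Range}(\Pi_\eps)}$ in these bases produces an analytic $3\times 3$ matrix
\[
B_\eps \;=\; \bigl(\langle \tilde\psi_j(\eps),\, L_\eps\tilde\phi_k(\eps)\rangle\bigr)_{j,k=0,1,2},
\]
whose eigenvalues are precisely the bifurcating eigenvalues $\mu_j(\eps)$. By Lemma \ref{lem:Jordan} together with biorthogonality, $B_0$ is the nilpotent matrix consisting of a $1\times 1$ zero block for $\phi_0$ plus a $2\times 2$ Jordan block for the chain $\phi_1\leftarrow\phi_2$. Writing the characteristic polynomial of $B_\eps$ as $P(\mu,\eps) = \mu^3 + \alpha_1(\eps)\mu^2 + \alpha_2(\eps)\mu + \alpha_3(\eps)$ with $\alpha_j(0)=0$ analytic, the structural claim needed to conclude is the sharp scaling $\alpha_j(\eps) = \mathcal{O}(\eps^j)$ for $j=1,2,3$.

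Granting this scaling, the substitution $\mu = \eps\nu$ gives
\[
\eps^{-3}\,P(\eps\nu,\eps) \;=\; \nu^3 + \tfrac{\alpha_1(\eps)}{\eps}\,\nu^2 + \tfrac{\alpha_2(\eps)}{\eps^2}\,\nu + \tfrac{\alpha_3(\eps)}{\eps^3} \;=:\; Q(\nu,\eps),
\]
which is analytic in $\eps$ up to and including $\eps=0$. Under the non-degeneracy assumption $\{T,M,P\}_{a,E,c}\neq 0$, together with the standing assumption that the three branches of $\mu(\eps)$ are distinct to leading order, the limiting polynomial $Q(\nu,0)$ is a cubic in $\nu$ with three simple roots. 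The implicit function theorem applied to $Q(\nu,\eps)=0$ at each simple root then produces analytic $\nu_j(\eps)$, and hence $\mu_j(\eps) = \eps\,\nu_j(\eps)$ is analytic, and in particular $C^1$, for $|\eps|\ll 1$.

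The main technical obstacle is verifying the sharp scaling $\alpha_j(\eps) = \mathcal{O}(\eps^j)$. A \emph{generic} analytic perturbation of the $2\times 2$ Jordan block appearing in $B_0$ would force the two associated eigenvalues to have a Puiseux expansion of order $\eps^{1/2}$ rather than $\eps$, so the $C^1$ conclusion rests on a non-trivial cancellation special to the Bloch perturbation. The plan here is to expand $B_\eps = B_0 + \eps B_1 + \mathcal{O}(\eps^2)$ and compute $B_1$ directly from $L_1 = \mathcal{L}[u] - 2\d_x^2$ using the explicit formulas of Lemma \ref{lem:Jordan}; the required cancellation should emerge from integration-by-parts identities together with the adjoint orthogonality relations $\langle\psi_0,\phi_1\rangle = \langle\psi_2,\phi_0\rangle = 0$, which force the entry of $B_1$ responsible for the would-be $\eps^{1/2}$ Puiseux branch to vanish, leaving only the linear-in-$\eps$ eigenvalue behavior that is consistent with the homogeneous degree-three Evans expansion \eqref{eqn:evans} and the Whitham dispersion relation \eqref{eqn:wpoly1}.
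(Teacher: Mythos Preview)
Your approach is essentially the paper's: reduce via the analytic Riesz projector to a $3\times 3$ matrix, locate the first-order cancellation that blocks the $\eps^{1/2}$ Puiseux branch, then rescale and invoke distinctness. The only packaging difference is that the paper performs the rescaling at the matrix level, setting $\hat M_\eps=\eps^{-1}S(\eps)^{-1}M_\eps S(\eps)$ with $S(\eps)=\mathrm{diag}(\eps,1,\eps)$; analyticity of $\hat M_\eps$ at $\eps=0$ is equivalent to your scaling $\alpha_j(\eps)=\mathcal{O}(\eps^j)$ and reduces to the vanishing of the two entries $[M_1]_{1,2}$ and $[M_1]_{3,2}$ (the $\phi_1$-column away from the $\psi_1$-row).

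One correction to your heuristic for the cancellation: the biorthogonality relations $\langle\psi_0,\phi_1\rangle=\langle\psi_2,\phi_0\rangle=0$ you cite are not what makes those entries vanish. What is actually used is that $\phi_1\in\ker L_0$ and $\psi_0,\psi_2\in\ker L_0^\dag$ (Lemma~\ref{lem:Jordan}), which kill the variation terms $\langle\tilde q_j,L_0\phi_1\rangle$ and $\langle\psi_j,L_0 q_1\rangle$, together with the profile identity $\mathcal{L}[u]u_x=0$, which collapses $L_1\phi_1=(\mathcal{L}[u]-2\partial_x^2)\phi_1$ to a constant multiple of $u_{xxx}$; the remaining pairings $\langle 1,u_{xxx}\rangle$ and $\langle u,u_{xxx}\rangle$ then vanish by periodicity and integration by parts, exactly as you anticipated.
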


\begin{proof}
To begin notice that since $\mu=0$ is an isolated eigenvalue of $L_0$, the associated total right eigenprojection $R(0)$ and total
left eigenprojection $L(0)$ perturb analytically in both $\eps$ (see \cite{K}).  It follows that we may find locally analytic right
and left bases $\{v_j(\eps)\}_{j=1}^3$ and $\{\tilde{v}_j(\eps)\}_{j=1}^3$ of the associated total eigenspaces given by the range of the projections
$R(\eps)$ and $L(\eps)$ such that $v_j(0)=\phi_j$ and $\tilde{v}_j(0)=\psi_j$.  Further defining the vectors
$V=(v_1,v_2,v_3)$ and $\tilde{V}=(\tilde{v}_1,\tilde{v}_2,\tilde{v}_3)^*$, where $*$ denotes the matrix adjoint, we may convert
the infinite-dimensional perturbation problem for the operator $L_{\eps}$ to a $3\times 3$ matrix perturbation problem for the matrix
\begin{equation}
M_{\eps}:=\left<\tilde{V}^*(\eps),L_{\eps}V(\eps)\right>_{L^2_{\rm per}([0,T])}.\label{eqn:meps}  
\end{equation}
In particular, the eigenvalues of the matrix $M_{\eps}$ are coincide precisely with the eigenvalues
$\mu_j(\eps)$ of the operator $L_\eps$ lying in a neighborhood of $\mu=0$, and the associated left and right eigenfunctions
of $L_{\eps}$ are
\[
f_j=Vw_j \textrm{ and }\tilde{f}_j=\tilde{w}_j\tilde{V}^*
\]
where $w_j$ and $\tilde{w}_j$ are the associated right and left eigenvectors of $M_\eps$, respectively.  Thus, to demonstrate
the desired smoothness of the spectrum of the operator $L_{\eps}$  near the origin for $|\eps|\ll 1$, we need only demonstrate the
three eigenvalues of the matrix $M_{\eps}$ have the desired smoothness properties, which is a seemingly much easier task.

Next, we expand the vectors
\begin{align}\label{eqn:bvecexp}
v_j(\eps)&=\phi_j+\eps  q_j(\eps)+\mathcal{O}(|\eps|^2)\\
\tilde{v}_j(\eps)&=\psi_j+\eps  \tilde{q}_j(\eps)+\mathcal{O}(|\eps|^2)
\end{align}
and expand the matrix $M_{\eps}$ as
\[
M_{\eps}=M_0+\eps M_1+\mathcal{O}(|\eps|^2).
\]
Notice there is some flexibility in our choice of the functions $q_j$ and $\tilde{q}_j$, a fact
that we will exploit later.  Now, however, our calculation does not depend on a particular choice
of the expansions in \eqref{eqn:bvecexp}.
From Lemma \ref{lem:Jordan} a straight forward computation shows that
\begin{equation}
M_0=\left(
      \begin{array}{ccc}
        0 & 0 & 0 \\
        0 & 0 & \left<\psi_1,L_0\phi_2\right>  \\
        0 & 0 & 0 \\
      \end{array}
    \right)\label{eqn:m0}
\end{equation}
where $\left<\psi_1,L_0\phi_2\right>=\frac{1}{2}\{T,M\}_{a,E}\{T,M,P\}_{a,E,c}$ is non-zero by assumption reflecting the Jordan structure
of the unperturbed operator $L_0$.  By standard matrix perturbation theory, the spectrum of the matrix $M_{\eps}$ is $C^1$ in $\eps$
provided the entries $[M_1]_{1,2}$ and $[M_1]_{3,2}$ of the matrix $M_1$ are both zero.  Using Lemma \ref{lem:Jordan} again,
it indeed follows that
\begin{align*}
[M_1]_{1,2}&=\left<\psi_0,L_1\phi_1+L_0q_1\right>+\left<\tilde{q}_0,L_0\phi_1\right>=0\\
[M_1]_{3,2}&=\left<\psi_2,L_1\phi_1+L_0q_1\right>+\left<\tilde{q}_2,L_0\phi_1\right>=0
\end{align*}
and hence the spectrum is $C^1(\RM)$ in the parameter $\eps$ near $\eps=0$ as claimed.
\end{proof}

As a result of Lemma \ref{lem:c1}, the associated (non-normalized) eigenfunctions bifurcating from the generalized
null-space are $C^1$ in the parameter $\eps$ in a neighborhood of $\eps=0$.  Moreover, our overall strategy is now
clear: since the eigenvalues of $M_{\eps}$ correspond to the eigenvalues of the Bloch operator $L_{\eps}$ near the
origin, we need only study the characteristic polynomial of the matrix $M_{\eps}$ near $\eps=0$ in order
to understand the modulational stability of the underlying periodic wave.  However, notice by equation \eqref{eqn:m0}
the unperturbed matrix $M_0$ has a a non-trivial Jordan block, and hence the analysis of the bifurcating
eigenvalues must be handled with care.  In order to describe the breaking of the two-by-two Jordan block described in Lemma \ref{lem:Jordan}
for $\eps$ small, we rescale matrix $M_{\eps}$ in \eqref{eqn:meps} as
\[
\hat{M}_{\eps}=\eps^{-1}S(\eps)^{-1}M_{\eps}S(\eps)
\]
where
\[
S(\eps)=\left(
          \begin{array}{ccc}
            \eps & 0 & 0 \\
            0 & 1 & 0 \\
            0 & 0 & \eps \\
          \end{array}
        \right).
\]
In particular, the matrix $\hat{M}_{\eps}$ is an analytic matrix-valued function of $\eps$, and the eigenvalues
of $\hat{M}_{\eps}$ are given by $\nu_j(\eps)=\eps^{-1}\mu_j(\eps)$, where $\mu_j(\eps)$ represents the eigenvalues
of $M_{\eps}$.  Notice that the second coordinate of the vectors in $\CM^{n+1}$ in the perturbation problem \eqref{eqn:meps}
corresponds to the coefficient of $u_x$ to variations $\psi$ in displacement.  Thus, the above rescaling
amounts to substituting for $\psi$ the variable $|\eps|\psi\sim\psi_x$ of the Whitham average system.
Our next goal is to prove that the characteristic polynomial for the rescaled matrix $M_{\eps}$ agrees with that
of the linearized dispersion relation $\hat{\Delta}(\mu,\kappa)$ corresponding to the homogenized system \eqref{eqn:whitham}, i.e. we want to prove that
\[
\det\left(\eps\hat{M}_{\eps}-\mu \left<S(\eps)^{-1}\tilde{V}_{\eps}^*V_{\eps}S(\eps)\right>\right)=\hat{\Delta}(\mu,\kappa)+\mathcal{O}(|\mu|^4+|\kappa|^4)
\]
for some non-zero constant $C$.  To this end, we begin by determining the required variations in the vectors $\tilde{V}_{\eps}$ and $V_{\eps}$ near $\eps=0$ contribute
to leading order in the above equation.  We begin by studying the structure of the matrix $\hat{ M}_{\eps}$.

\begin{lemma}\label{lem:meps}
The matrix rescaled $\hat{M}_{\eps}$ can be expanded as
\[
\hat{M}_{\eps}=\eps^{-1}\left<S^{-1}(\eps)\left(
                                   \begin{array}{c}
                                     \psi_0 \\
                                     \psi_1 \\
                                     \psi_2+\eps \tilde{q}_2 \\
                                   \end{array}
                                 \right)L_{\eps}\left(\phi_0,\phi_1+\eps q_1,\phi_2\right)S(\eps)\right>_{L^2_{\rm per}([0,T])}+o(1)
\]
in a neighborhood of $\eps=0$.  In particular, the only first order variations of the vectors $V_{\eps}$ and $\tilde{V}_{\eps}$
which contribute to leading order are $\tilde{q}_2$ and $q_1$.
\end{lemma}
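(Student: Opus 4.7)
The plan is to expand each entry of $M_{\eps}$ to first order in $\eps$, apply the asymmetric rescaling induced by $S(\eps)=\mathrm{diag}(\eps,1,\eps)$, and use the kernel structure of $L_0,L_0^\dag$ provided by Lemma \ref{lem:Jordan} to single out which of the six first-order corrections $\tilde q_0,\tilde q_1,\tilde q_2,q_0,q_1,q_2$ from \eqref{eqn:bvecexp} survive at $O(1)$ in $\hat M_\eps$.

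First I would compute $L_\eps=L_0+\eps L_1+O(\eps^2)$ from the explicit conjugation $L_\eps=e^{-\eps x}\partial_x\mathcal{L}[u]e^{\eps x}$ and substitute the analytic expansions \eqref{eqn:bvecexp} into \eqref{eqn:meps} to obtain
\[
[M_\eps]_{i,j}=\langle\psi_{i-1},L_0\phi_{j-1}\rangle+\eps\bigl(\langle\psi_{i-1},L_1\phi_{j-1}\rangle+\langle\tilde q_{i-1},L_0\phi_{j-1}\rangle+\langle\psi_{i-1},L_0q_{j-1}\rangle\bigr)+O(\eps^2).
\]
The rescaling factor $\eps^{-1}(S^{-1})_{ii}S_{jj}$ equals $\eps^{-1}$ in positions $(1,1),(1,3),(2,2),(3,1),(3,3)$, equals $\eps^{-2}$ in $(1,2)$ and $(3,2)$, and equals $1$ in $(2,1)$ and $(2,3)$. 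Combined with Lemma \ref{lem:Jordan} -- which forces $[M_0]$ to vanish outside $(2,3)$ -- and with the vanishing identities $[M_1]_{1,2}=[M_1]_{3,2}=0$ already verified in the proof of Lemma \ref{lem:c1}, the orders of $[M_\eps]_{i,j}$ match the inverse orders of the rescaling factor, so that $\hat M_\eps$ is genuinely $O(1)$ as $\eps\to 0$.

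Next I would identify the surviving first-order contributions via the null relations in Lemma \ref{lem:Jordan}. The $\tilde q_{i-1}$-correction enters only through $\langle\tilde q_{i-1},L_0\phi_{j-1}\rangle$, which vanishes when $j\in\{1,2\}$ since $L_0\phi_0=L_0\phi_1=0$, and reduces to $-\langle\tilde q_{i-1},\phi_1\rangle$ when $j=3$. Symmetrically, the $q_{j-1}$-correction enters through $\langle L_0^\dag\psi_{i-1},q_{j-1}\rangle$, which vanishes when $i\in\{1,3\}$ and reduces to $\langle\psi_2,q_{j-1}\rangle$ when $i=2$. Matching these non-vanishing patterns against the rescaling table, the $q$-variations can contribute at $O(1)$ only at entry $(2,2)$, forcing $q_1$ to be the lone surviving $q$ via $\langle\psi_2,q_1\rangle$; while the $\tilde q$-variations can contribute at $O(1)$ only at entries $(1,3)$ and $(3,3)$, a priori allowing both $\tilde q_0$ and $\tilde q_2$.

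The main remaining obstacle is to dispose of the $\tilde q_0$ candidate at $(1,3)$, for which I would exploit the flexibility in basis choice noted in the proof of Lemma \ref{lem:c1}. Under the elementary replacement $\tilde v_0(\eps)\mapsto\tilde v_0(\eps)+\eps\alpha\,\tilde v_1(\eps)$, which produces another analytic basis of the same left total eigenspace and corresponds to left-multiplication of both $M_\eps$ and $\tilde V_\eps^*V_\eps$ by the same invertible analytic matrix (and therefore leaves the generalized spectrum of the pencil invariant), one has $\tilde q_0\mapsto\tilde q_0+\alpha\psi_1$. A direct integration by parts, using that $\psi_1=\int_0^x\phi_2\,ds$ is genuinely periodic since $\int_0^T\phi_2\,dx=\{M,T,M\}_{a,E,c}=0$, yields $\langle\psi_1,\phi_1\rangle=-\tfrac{1}{2}\{T,M\}_{a,E}\{T,M,P\}_{a,E,c}$, which is nonzero under the standing non-degeneracy hypotheses. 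Choosing $\alpha=-\langle\tilde q_0,\phi_1\rangle/\langle\psi_1,\phi_1\rangle$ enforces $\langle\tilde q_0,\phi_1\rangle=0$ in the new basis, eliminating the $(1,3)$ contribution; after this one normalization only $\tilde q_2$ and $q_1$ remain at leading order, giving the asserted expansion.
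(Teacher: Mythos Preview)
Your overall strategy---expand $M_\eps$, apply the rescaling $\eps^{-1}S^{-1}(\cdot)S$, and use the Jordan structure of Lemma~\ref{lem:Jordan} to see which of the $q_j,\tilde q_j$ survive---is exactly the paper's. The gap is that you stop the expansion of $M_\eps$ at first order. Because the rescaling factor at positions $(1,2)$ and $(3,2)$ is $\eps^{-2}$, the $O(1)$ part of $\hat M_\eps$ there comes from $M_2$, not $M_1$. You correctly note $[M_0]_{1,2}=[M_0]_{3,2}=[M_1]_{1,2}=[M_1]_{3,2}=0$, so $\hat M_\eps$ is bounded, but you never analyze which variations enter $M_2$ at those positions. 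A direct computation (as the paper carries out) gives
\[
[M_2]_{1,2}=\langle\psi_0,L_2\phi_1+L_1q_1\rangle+\langle\tilde q_0,L_1\phi_1+L_0q_1\rangle,
\qquad
[M_2]_{3,2}=\langle\psi_2,L_2\phi_1+L_1q_1\rangle+\langle\tilde q_2,L_1\phi_1+L_0q_1\rangle,
\]
so $q_1$ contributes here through $\langle\psi_0,L_1q_1\rangle$ and $\langle\psi_2,L_1q_1\rangle$ (not only at $(2,2)$ as you assert), and $\tilde q_0$ contributes through $\langle\tilde q_0,L_1\phi_1+L_0q_1\rangle$---a term your first-order bookkeeping never sees.

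This second appearance of $\tilde q_0$ is what breaks your elimination argument. Your basis change $\tilde v_0\mapsto\tilde v_0+\eps\alpha\tilde v_1$ sets $\langle\tilde q_0,\phi_1\rangle=0$ and kills the $(1,3)$ contribution, but it does \emph{not} force $\langle\tilde q_0,L_1\phi_1+L_0q_1\rangle=0$ at $(1,2)$; a single scalar $\alpha$ cannot in general meet both conditions. The paper's proof instead records $M_2$ explicitly and relies (implicitly, and made precise just after the lemma) on the freedom in $q_1$: choosing $q_1$ with $L_0q_1=-L_1\phi_1$ makes $L_1\phi_1+L_0q_1=0$, which annihilates the $\tilde q_0$ and $\tilde q_2$ terms at $(1,2)$ and $(3,2)$ simultaneously. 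To repair your argument you must either push the expansion to second order and invoke this choice of $q_1$, or combine your $\tilde v_0$-shift with a compatible choice of $q_1$; the single basis change you propose is not sufficient on its own.
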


\begin{proof}
The idea is to undo the rescaling and find which entries of the unscaled matrix $M_{\eps}$ contribute to leading order.
To begin, we expand the non-rescaled matrix $M_{\eps}$ from \eqref{eqn:meps} as
\[
M_{\eps}=M_0+\eps M_1+\eps^2M_2+\mathcal{O}(|\eps|^3)
\]
and notice that $M_0$ was computed in \eqref{eqn:m0} and was shown to be nilpotent but non-zero, possessing a nontrivial
associated Jordan chain of height two.  Using Lemma \ref{lem:Jordan} a straight forward computation shows the matrix $M_1$
can be expressed as
\begin{align*}
M_1&=\left(
      \begin{array}{ccc}
        \left<\psi_0,L_1\phi_0+L_0q_0\right> & 0 & \left<\psi_0,L_1\phi_2\right>+\left<\tilde{q}_0,L_0\phi_2\right> \\
        * & \left<\psi_1,L_1\phi_1+L_0q_1\right> & * \\
        \left<\psi_2,L_1\phi_0+L_0q_0\right>  & 0 & \left<\psi_2,L_1\phi_2\right>+\left<\tilde{q}_2,L_0\phi_2\right> \\
      \end{array}
    \right)\\
&=
\left(
      \begin{array}{ccc}
        \left<\psi_0,L_1\phi_0\right> & 0 & \left<\psi_0,L_1\phi_2\right>+\left<\tilde{q}_0,L_0\phi_2\right> \\
        * & \left<\psi_1,L_1\phi_1+L_0q_1\right> & * \\
        \left<\psi_2,L_1\phi_0\right>  & 0 & \left<\psi_2,L_1\phi_2\right>+\left<\tilde{q}_2,L_0\phi_2\right> \\
      \end{array}
    \right)
\end{align*}
where the $*$ terms are not necessary as they contribute to higher order terms in the rescaled matrix $\hat{M}_{\eps}$.
Similarly, the relevant entries of the matrix $M_2$ are given by
\[
M_2=\left(
      \begin{array}{ccc}
        * & \left<\psi_0,L_2\phi_1+L_1q_1\right>+\left<\tilde{q}_0,L_1\phi_1+L_0q_1\right> & * \\
        * & * & * \\
        * & \left<\psi_2,L_2\phi_1+L_1q_1\right>+\left<\tilde{q}_2,L_1\phi_1+L_0q_1\right> & * \\
      \end{array}
    \right).
\]
Therefore, it follows the relevant entries of the matrix $M_{\eps}$ up to $\mathcal{O}(|\eps|^2)$ can be evaluated
using only the variations $q_1$ and $\tilde{q}_2$, as claimed.
\end{proof}

The main point of the above lemma is that in order to compute to projection of the operator $L_{\eps}$ onto the eigenspace
bifurcating null-space at $\eps=0$ to leading order, you only need to consider the variations in the bottom of the left and right Jordan chains: all
other variations contribute to terms of higher order.  Our next lemma shows the variation in the $\psi_0$ direction is also needed
to compute the corresponding projection of the identity to leading order.

\begin{lemma}\label{lem:ieps}
Define the matrix $\tilde{ I}_{\eps}:=\left<S(\eps)^{-1}\tilde{V}^*_{\eps}\tilde{V}_{\eps}S(\eps)\right>$.  Then $\tilde{ I}_{\eps}$
can be expanded near $\eps=0$ as
\[
\tilde{I}_{\eps}=\left<S^{-1}(\eps)\left(
                                   \begin{array}{c}
                                     \psi_0+\eps\tilde{q}_0 \\
                                     \psi_1 \\
                                     \psi_2+\eps \tilde{q}_2 \\
                                   \end{array}
                                 \right)\left(\phi_0,\phi_1+\eps q_1,\phi_2\right)S(\eps)\right>_{L^2_{\rm per}([0,T])}+o(1).
\]
\end{lemma}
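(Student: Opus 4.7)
The plan is to follow the template of the proof of Lemma \ref{lem:meps}: expand the unscaled Gram matrix
\[
N_\eps := \left<\tilde V^*_\eps\, V_\eps\right>_{L^2_{\rm per}([0,T])} = N_0 + \eps N_1 + \mathcal{O}(|\eps|^2),
\]
with $(N_0)_{ij} = \left<\psi_i,\phi_j\right>$ and $(N_1)_{ij} = \left<\tilde q_i,\phi_j\right> + \left<\psi_i,q_j\right>$, and then determine which entries of $N_\eps$ survive the similarity transform $\tilde I_\eps = S(\eps)^{-1} N_\eps S(\eps)$. Since $S(\eps) = {\rm diag}(\eps,1,\eps)$, the conjugation multiplies entry $(i,j)$ of $N_\eps$ by $s_i^{-1} s_j$, which equals $1$ at the four corners and at $(2,2)$, equals $\eps$ at $(2,1)$ and $(2,3)$, and equals the singular factor $\eps^{-1}$ at $(1,2)$ and $(3,2)$.

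The crux of the argument is to verify that the potential $\eps^{-1}$ singularities cancel at leading order. Using the explicit formulas from Lemma \ref{lem:Jordan} together with $T$-periodicity of $u$, one computes
\[
(N_0)_{12} = \left<1,\,\{T,M\}_{a,E}\,u_x\right> = \{T,M\}_{a,E}\bigl(u(T)-u(0)\bigr) = 0,
\]
and, using that $u\,u_x$ is a total derivative of $u^2/2$,
\[
(N_0)_{32} = \left<\{T,M\}_{E,c}+\{T,M\}_{a,E}\,u,\ \{T,M\}_{a,E}\,u_x\right> = 0.
\]
Once these vanishings are in hand, the $\eps^{-1}$ prefactor is compensated by the $\eps$ coming from the first-order term, and the leading contributions at $(1,2)$ and $(3,2)$ are exactly $(N_1)_{12}$ and $(N_1)_{32}$, which involve only the variations $\tilde q_0,q_1$ and $\tilde q_2,q_1$ respectively.

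For the remaining entries the bookkeeping is immediate. The $\eps$ prefactor at $(2,1)$ and $(2,3)$ makes those entries $\mathcal{O}(\eps) = o(1)$ regardless of the variations chosen, and at the corner entries and at $(2,2)$ only the zeroth-order term $(N_0)_{ij}$ matters, with all first-order corrections sliding into the $o(1)$ remainder. Consequently the variations $q_0$, $q_2$, and $\tilde q_1$ never contribute at leading order, which is precisely the content of the formula asserted in the lemma. The only real obstacle is the pair of periodicity-based vanishings of $(N_0)_{12}$ and $(N_0)_{32}$; these are routine integrations but must be verified in order to license the cancellation of the $\eps^{-1}$ singularities.
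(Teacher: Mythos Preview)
Your argument is correct and follows essentially the same route as the paper: expand the unscaled Gram matrix $I_\eps=\langle\tilde V_\eps^*,V_\eps\rangle$ as $I_0+\eps I_1+\mathcal{O}(|\eps|^2)$, identify which entries survive the conjugation $S(\eps)^{-1}(\cdot)S(\eps)$, and read off that only the variations $\tilde q_0,\tilde q_2,q_1$ contribute at leading order. The only cosmetic difference is that the paper invokes the full biorthogonality $\langle\psi_i,\phi_j\rangle=0$ for $i\neq j$ in one stroke, whereas you verify by hand just the two vanishings $(I_0)_{12}=(I_0)_{32}=0$ actually needed to kill the $\eps^{-1}$ singularities; your explicit periodicity computation is exactly what underlies the paper's asserted fact at those entries.
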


\begin{proof}
As in the proof of Lemma \ref{lem:meps}, we undo the rescaling and find the terms which contribute to leading order.
To begin, we expand the matrix $I_{\eps}:=\left<\tilde{V}_{\eps}^*V_{\eps}\right>$ as
\[
I_{\eps}=I_0+\eps I_1+\mathcal{O}(|\eps|^2)
\]
Using the fact that $\left<\psi_i,\phi_j\right>=0$ if $i\neq j$, it follows that
\[
 I_0=\left(
                                          \begin{array}{ccc}
                                            \left<\psi_0,\phi_0\right> & 0 & 0 \\
                                            0 & \left<\psi_1,\phi_1\right> & 0 \\
                                            0 & 0 & \left<\psi_2,\phi_2\right> \\
                                          \end{array}
                                        \right)
\]
and
\[
I_1=\left(
            \begin{array}{ccc}
              * & \left<\psi_0,q_1\right>+\left<\tilde{q}_0,\phi_1\right> & * \\
              * & * & * \\
              * & \left<\psi_2,q_1\right>+\left<\tilde{q}_2,\phi_1\right> & * \\
            \end{array}
          \right)
\]
from which the lemma follows by rescaling.
\end{proof}

With the above preparations, we can now project the operator $L_{\eps}-\mu(\eps)$ onto the total eigenspace bifurcating from
the origin for small $|\eps|\ll 1$.  Our claim is that projected and rescaled matrix agrees with the Whitham system, up to
a similarity transformation.  To show this, we begin by showing the characteristic polynomial of the corresponding rescaled
matrix agrees with the linearized dispersion relation $\hat{\Delta}(\mu,\kappa)$ corresponding to the linearized
Whitham system.  Once this is established, the fact that the bifurcating eigenvalues are distinct will imply the desired similarity by the analysis in \cite{JZ1}.

In order to compute the characteristic polynomial of the matrix projection of the operator $L_{\eps}-\mu(\eps)$, we must
now make a few specific choices as to the variations in the vectors $\tilde{V}_{\eps}$ and $V_{\eps}$.
As noted in the proof of Lemma \ref{lem:c1},there is quite a bit of flexibility in our choice of the expansion \eqref{eqn:bvecexp}.
As a naive choice, we could require that $v_j(\eps)$ be an eigenfunction of $L_{\eps}$ for small $\eps$.  For example,
we may choose $v_0$ and $v_2$ to satisfy
\[
L_0q_j=(\lambda_1-L_1)\phi_j,~~j=0,2
\]
where $\lambda_1$ is the corresponding eigenvalue bifurcating from the origin.  Similarly, we may choose $\tilde{v}_0$ and $\tilde{v_1}$
to satisfy
\[
L_0^\dag\tilde{q}_j=(\lambda_1^*-L_1)\psi_j,~~j=0,1
\]
where $*$ denotes complex conjugation.  However, this is not necessary: we only need $v_j(\eps)$ to provide a \textit{basis} for the null-eigenspace of the operator $L_{\eps}$.
To illustrate this, instead of choosing the variation in the $\phi_1$ direction to satisfy an eigenvalue equation
we notice there are two distinct eigenvalues with expansions
\begin{align*}
\mu_1(\eps)&=\eps \lambda_1+o(|\eps|),\\
\tilde{\mu}_1(\eps)&=\eps\tilde{\lambda}_1+o(|\eps|)
\end{align*}
corresponding the the eigenfunctions $\phi_1+\eps g_1+o(|\eps|)$ and $\phi_1+\eps\tilde{g}_1+o(|\eps|)$.  In particular,
it follows that the functions $g_1$ and $\tilde{g}_1$ satisfy the equations
\begin{align*}
L_0g_1&=(\lambda_1-L_1)\phi_1,\\
L_0\tilde{g}_1&=(\tilde{\lambda}_1-L_1)\phi_1.
\end{align*}
Defining $q_1=\left(\tilde{\lambda}_1-\lambda_1\right)^{-1}\left(\tilde{\lambda}_1g_1-\lambda_1\tilde{g}_1\right)$ then, it follows
that the function $q_1$ to satisfies
\begin{align*}
L_0q_1&=\left(\tilde{\lambda}_1-\lambda_1\right)^{-1}\left(\tilde{\lambda}_1(\lambda_1-L_1)\phi_1-\lambda_1(\tilde{\lambda}_1-L_1)\phi_1\right)\\
&=-L_1\phi_1
\end{align*}
and hence may be chosen to satisfy $q_1\perp\textrm{span}\{\psi_0,\psi_1\}$.  To find a closed form expression for $q_1$,
notice that
\[
L_0\left(xu_x\right)=2u_{xxx}=-L_1u_x.
\]
Moreover, a direct calculation shows that the function $u_E$ satisfies $\mathcal{L}[u]u_E=0$ and the function
\[
\tilde{\phi}=-\{T,M\}_{a,E}\left(xu_x+\frac{T}{T_E}u_E\right)
\]
is $T$-periodic.  In particular, $L_0\tilde{\phi}=-L_1\phi_1$ and $\tilde{\phi}\perp\textrm{span}\{\psi_0,\psi_1\}$
and hence we may choose $q_1=\tilde{\phi}$.  Note there are many other choices for $q_1$ that are possible: our choice
is made to simplify the forthcoming calculations.  Similarly, since $\psi_2$ is at the bottom of the
Jordan chain of the null-space of $L_{0}^\dag$, we may choose $\tilde{g}_1$ to satisfy the equation
\[
L_0^\dag\tilde{q}_2=-L_1^\dag\psi_2.
\]
Unlike the variation in $\phi_1$, we do not need a closed form expression for $\tilde{g}_1$: the above defining
relation will be sufficient for our purposes.

With the above choices, all the necessary inner products described in Lemmas \ref{lem:meps} and \ref{lem:ieps}
may be evaluated explicitly.  Indeed, straight forward computations show that
\begin{align*}
M_0&=\left(
      \begin{array}{ccc}
        0 & 0 & 0 \\
        0 & 0 & \frac{1}{2}\{T,M\}_{a,E}\{T,M,P\}_{a,E,c} \\
        0 & 0 & 0 \\
      \end{array}
    \right)\\
M_1&=\left(
       \begin{array}{ccc}
         T_ET & * & 0 \\
         * & 0 & * \\
         T(T_E\{T,M\}_{E,c}+Ta\{T,M\}_{a,E}) & * & 0 \\
       \end{array}
     \right)\\
M_2&=\left(
       \begin{array}{ccc}
         * & T\{T,K\}_{a,E} & * \\
         * & * & * \\
         * & T\{T,M\}_{E,c}\{T,K\}_{a,E}+\frac{T\{T,M\}_{a,E}}{T_E}\left(T_a\{T,K\}_{a,E}-T\{T,M\}_{a,E}\right) & * \\
       \end{array}
     \right).
\end{align*}
and
\begin{align*}
I_0&=\left(
       \begin{array}{ccc}
         \{T,M\}_{a,E} & 0 & 0 \\
         0 & -\frac{1}{2}\{T,M\}_{a,E}\{T,M,P\}_{a,E,c} & 0 \\
         0 & 0 & \frac{1}{2}\{T,M\}_{a,E}\{T,M,P\}_{a,E,c} \\
       \end{array}
     \right)\\
I_1&=\left(
       \begin{array}{ccc}
         * & -\{T,M\}_{E,c}T-\{T,M\}_{a,E}M & * \\
         * & * & * \\
         * & 2\{T,M\}_{a,E}\{K,T,M\}_{a,E,c}-\{T,M\}_{E,c}^2T-2\{T,M\}_{E,c}\{T,M\}_{a,E}M-\{T,M\}_{a,E}^2P & * \\
       \end{array}
     \right).
\end{align*}
We can thus explicitly compute the rescaled matrices $\hat{M}_\eps$ and $\tilde{I}_{\eps}$ in terms of the underlying
solution $u$, which yields the following theorem.

\begin{thm}\label{thm:main1}
Let $(a_0,E_0,c_0)\in\mathcal{D}$ and assume the matrices $\frac{\partial(M\omega,P\omega,\omega)}{\partial(\dot{u})}$ and
$\frac{\partial(\dot{u}}{\partial(a,E,c)}$ are invertible at $(a_0,E_0,c_0)$.  Then the linearized dispersion relation
$\hat{\Delta}(\mu,\kappa)$ in \eqref{eqn:wpoly1} satisfies
\[
\det\left(\eps\hat{M}_{\eps}-\mu \left<S(\eps)^{-1}\tilde{V}_{\eps}^*V_{\eps}S(\eps)\right>\right)=C\hat{\Delta}(\mu,\kappa)+\mathcal{O}(|\mu|^4+|\kappa|^4)
\]
for some constant $C\neq 0$.  That is, up to a constant the linearized dispersion relation for the homogenized system \eqref{eqn:whitham}
accurately describes the low-frequency behavior of the spectrum of the Bloch-operator $L_{\xi}$.
\end{thm}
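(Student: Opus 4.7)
The proof plan is essentially computational: assemble the rescaled matrices from Lemmas 3.3 and 3.4, expand the $3\times 3$ determinant, and compare the resulting cubic polynomial with the Whitham dispersion relation $\hat{\Delta}(\mu,\kappa)$ by way of the Bronski--Johnson polynomial $R(y)$ from equation (2.11).

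First I would use the conjugation rule $[S^{-1}AS]_{ij}=(s_j/s_i)A_{ij}$ with $S=\operatorname{diag}(\eps,1,\eps)$ to write
\[
\eps \hat{M}_{\eps}=S(\eps)^{-1}M_{\eps}S(\eps),\qquad \tilde{I}_{\eps}=S(\eps)^{-1}I_{\eps}S(\eps),
\]
and verify that these are analytic in $\eps$. Analyticity follows because the potentially singular entries, namely $M_{12},M_{32},I_{12},I_{32}$, vanish to the appropriate order: $(M_0)_{12}=(M_0)_{32}=(M_1)_{12}=(M_1)_{32}=0$ by the computations in Lemma 3.2 and Lemma 3.3, and $(I_0)_{12}=(I_0)_{32}=0$ by the biorthogonality $\langle\psi_i,\phi_j\rangle=0$ for $i\neq j$. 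Using the explicit entries of $M_0,M_1,M_2,I_0,I_1$ displayed just before the theorem, I would then read off
\[
\eps\hat{M}_{\eps}=\eps\, A+\mathcal{O}(\eps^2),\qquad \tilde{I}_{\eps}=B+\mathcal{O}(\eps),
\]
where $A$ and $B$ are explicit constant $3\times 3$ matrices whose only non-zero entries lie at positions $(1,1),(1,2),(2,3),(3,1),(3,2)$ for $A$, and at positions $(1,1),(1,2),(2,2),(3,2),(3,3)$ for $B$.

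Next I would compute the $3\times 3$ determinant of the pencil $\eps A-\mu B$. Since each entry of this pencil is a linear combination of $\eps$ and $\mu$, the determinant is a homogeneous cubic polynomial in $(\eps,\mu)$, and hence via the substitution $\eps=i\kappa/T$ a homogeneous cubic in $(\kappa,\mu)$. The sparsity patterns noted above eliminate all but a handful of permutation products, leaving a manageable expression. I would then simplify this expression using the generating-function identities from Section 2, namely $T=K_E,\ M=K_a,\ P=2K_c$ together with $K=H+aM+\tfrac{c}{2}P+ET$, in order to eliminate the action $K$ and the parameter $a$ in favor of brackets of the conserved quantities $T,M,P$. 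The target is to show that this expression equals $\mu^3 R(i\kappa/\mu)$ up to a non-zero multiplicative constant, where $R$ is the cubic in (2.11).

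Once that identification is in place, the conclusion follows by combining with the Evans-function expansion (2.10), $D(\mu,\kappa)=\Delta(\mu,\kappa)+\mathcal{O}(|\mu|^4+\kappa^4)$ with $\Delta=\mu^3 R(i\kappa/\mu)$, and Theorem 2.5, which gives $D=\Gamma_0\hat{\Delta}+\mathcal{O}(|\mu|^4+\kappa^4)$ for a non-zero $\Gamma_0$; hence the computed determinant and $\hat{\Delta}$ must agree up to a non-zero constant modulo the quartic remainder, which is the claim. I expect the main obstacle to be the step collapsing the determinant onto $R$: the $(3,2)$-entry of $A$ carries the ungainly combination $T\{T,M\}_{E,c}\{T,K\}_{a,E}+\frac{T\{T,M\}_{a,E}}{T_E}(T_a\{T,K\}_{a,E}-T\{T,M\}_{a,E})$, and coaxing it to conspire with the other entries and with $(I_1)_{32}$ to produce the three-by-three bracket $\{T,M,P\}_{a,E,c}$ and the two-by-two brackets $\{T,P\}_{E,c},\{M,P\}_{a,E}$ that appear in $R$ will require a careful sequence of substitutions from the action identity. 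Everything else is linear algebra on a sparse matrix.
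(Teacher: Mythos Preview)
Your computational setup (assembling $\eps A-\mu B$ from the displayed entries of $M_0,M_1,M_2,I_0,I_1$ and expanding the sparse $3\times 3$ determinant) matches the paper's, but your closing step goes in the wrong direction. You propose to identify the determinant with $\mu^3 R(i\kappa/\mu)$ and then invoke the Evans expansion (2.10) together with Theorem~\ref{thm:JZ1} to pass from $R$ to $\hat{\Delta}$. That is precisely the chain of reasoning the paper is written to \emph{avoid}: the stated aim is to justify the Whitham dispersion relation without any appeal to the Evans function, and both (2.10) and Theorem~\ref{thm:JZ1} are Evans-function results imported from \cite{BrJ} and \cite{JZ1}. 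Routing the conclusion through them would make the theorem logically circular relative to the paper's purpose, even if formally valid.

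The paper instead compares the computed determinant \emph{directly} with $\hat{\Delta}(\mu,\kappa)$ as defined by \eqref{eqn:wpoly1}. The computation produces $C\hat{\Delta}(\mu,\kappa)$ plus an apparently spurious contribution proportional to
\[
\frac{T^3\{T,M\}_{a,E}^3}{2T_E}\bigl(\{T,M\}_{E,a}+T_a^2-2T_cT_E\bigr),
\]
and the entire point of the proof is the observation that this vanishes identically: from the integral formulas \eqref{period}--\eqref{momentum} and \eqref{eqn:gradk} one has $2T_c=P_E=M_a$ and $T_a=M_E$, whence $T_a^2-2T_cT_E=T_aM_E-T_EM_a=\{T,M\}_{a,E}=-\{T,M\}_{E,a}$. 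This is the key cancellation, and it replaces the algebraic ``collapsing onto $R$'' you flag as the main obstacle. So the labor you anticipate is real, but the target should be $\hat{\Delta}$ itself, and the decisive identity is the one just stated rather than the action relation $K=H+aM+\tfrac{c}{2}P+ET$.
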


\begin{proof}
A straightforward computation using the above identities implies
\begin{align*}
\det\left(\eps\hat{M}_{\eps}-\mu \left<S(\eps)^{-1}\tilde{V}_{\eps}^*V_{\eps}S(\eps)\right>\right)
       &=\frac{T^3\{T,M\}_{a,E}^3}{2T_E}\left(\{T,M\}_{E,a}+T_a^2-2T_cT_E\right)\\
&+C\hat{\Delta}(\mu,\kappa)+\mathcal{O}(|\mu|^4+|\kappa|^4)
\end{align*}
for some non-zero constant $C=C(a,E,c)$.  Moreover, using the identity
\[
2T_c=P_E=M_a=-\frac{\sqrt{2}}{4}\oint_{\Gamma}\frac{u^2~du}{\left(E-V(u;a,c)\right)^{3/2}},
\]
which immediately follow from the integral formulas \eqref{period}-\eqref{momentum},
along with the fact that $T_a=M_E$ by \eqref{eqn:gradk}, it follows that
\[
T_a^2-2T_cT_E=T_aM_E-T_EM_a=\{T,M\}_{a,E}=-\{T,M\}_{E,a}
\]
and hence
\[
\det\left(\eps\hat{M}_{\eps}-\mu \left<S(\eps)^{-1}\tilde{V}_{\eps}^*V_{\eps}S(\eps)\right>\right)=C\hat{\Delta}(\mu,\kappa)+\mathcal{O}(|\mu|^4+|\kappa|^4)
\]
as claimed.
\end{proof}

Theorem \ref{thm:main1} provides a rigorous verification of the Whitham modulation equations for the gKdV equations.  While
this has recently been established in \cite{JZ1}, the important observation here is that the verification was independent
of the restrictive Evans function techniques.  Thus, the above computation can be used as a blue print of how to rigorously
justify Whitham expansions in more complicated settings where the Evans function framework is not available.  As a consequence
of our assumption that the eigenvalues of $M_{\eps}$ be distinct for $0<|\eps|\ll 1$, it follows that there must exist a similarity
transformation between the matrix
\[
\partial_{\eps}\left(\eps\hat{M}_{\eps}-\mu \left<S(\eps)^{-1}\tilde{V}_{\eps}^*V_{\eps}S(\eps)\right>\right)\big{|}_{\eps=0}
\]
and the matrix arising from the linearized Whitham system \eqref{eqn:wpoly1}.  Thus, the variations predicted by Whitham
to control the long-wavelength stability of a periodic traveling wave solution of the gKdV are indeed
the variations needed at the level of the linearized Bloch-expansion.

\section{Conclusions}

In this paper we considered the spectral stability of a periodic traveling wave of the gKdV equation to long-wavelength
perturbations.  Recently, this notion of stability has been the focus of much work in the context of viscous systems of conservation laws
\cite{OZ1,OZ3,OZ4,Se1,JZ4,JZ5} and nonlinear dispersive equations \cite{BrJ,BrJK,J2,JZ}.  While much of this work has utilized
the now familiar and powerful Evans function techniques, our approach of using a direct Bloch-decomposition
of the linearized operator serves to provide an elementary method in the one-dimensional setting considered here
as well as a robust method which applies in the more complicated setting of multi-periodic structures.  As such, our hope
is that the simple and straightforward analysis in this paper will be used as a blue print of how to justify the Whitham
modulation equations in settings which do not admit a readily computable Evans function.  In future work, we will report
on the application of this method to the long-wavelength stability of doubly periodic traveling waves of viscous
systems of conservation laws.

As a future direction of investigation, we note that Theorem \ref{thm:main1} and the assumption that the branches of spectrum
bifurcating from the origin are distinct suggests a possibly more algorithmic approach
to justifying the Whitham modulation equations via the above Bloch-expansion methods which furthermore does not
rely on comparing the linearized dispersion relations as in Theorem \ref{thm:main1}: it should be possible to
justify the Whitham expansion by direct comparison of inner products and showing the mentioned similarity
directly.  As a first step, we suggest beginning this line of investigation by studying the gKdV equation
as in the current paper and \cite{JZ1} and expressing the full Whitham system in terms of inner products.
Demonstrating the desired similarity in this way, while not completely necessary, could substantially simplify the amount
of work required to justify the Whitham equations by not requiring an initial justification at the spectral level
(a possibly daunting task in more general situations).

\end{document}